\tikzstyle{hvertex}=[thick,circle,inner sep=0.cm, minimum size=2mm, fill=white, draw=black]
\tikzstyle{hedge}=[very thick]
\tikzstyle{harrow}=[thick,arrows=->]
\tikzstyle{darrow}=[thick,arrows=<-]
\tikzstyle{rededge}=[very thick,red]
\tikzstyle{point}=[draw,circle,inner sep=0.cm, minimum size=1mm, fill=black]
\tikzstyle{pointer}=[thick,->,shorten >=2pt,color=hellgrau]
\tikzstyle{facebdry}=[color=auchblau, very thick] % face boundary
\tikzstyle{face}=[facebdry,fill=hellblau]
\tikzstyle{nface}=[color=hellblau,fill=hellblau,thick] % naked face, without boundary
\tikzset{>={latex}}
\tikzstyle{tinyvx}=[thick,circle,inner sep=0.cm, minimum size=1.3mm, fill=white, draw=black]
\colorlet{auchblau}{blue!60!white}
\colorlet{hellblau}{blue!20!white}
\colorlet{hellrot}{red!40!white}
\colorlet{hellgrau}{black!30!white}
\newtheorem{definition}{Definition}
\newtheorem{theorem}[definition]{Theorem}
\newtheorem{lemma}[definition]{Lemma}
\renewcommand{\phi}{\varphi}
\renewcommand{\epsilon}{\varepsilon}
\newcommand{\comment}[1]{}
\newcommand{\N}{\mathbb N}
\newcommand{\cF}{\mathcal{F}}
\newcommand{\EP}{Erd\H os-P\'osa property}
\title{Directed cycles have the edge-Erd\H os-P\'osa property}
\author{Matthias Heinlein\thanks{Ulm University, {\tt matthias.heinlein@uni-ulm.de}} \and Arthur Ulmer\thanks{Ulm University, {\tt arthur.ulmer@uni-ulm.de}, supported by DFG, grant no.\  BR 5449/1-1}} 
\date{}
\begin{document}

\maketitle

\begin{abstract}
In this short note we prove that for every $k\in \N$ there is a $t_k\in\N$ such that
for every digraph $G$
there are either $k$ edge-disjoint directed cycles in $G$
or a set $X$ of at most $t_k$ edges such that $G-X$ contains no directed cycle.
\end{abstract}

\section{Introduction}
	If $\cF$ is a family of (directed or undirected) graphs, we say that $\cF$ has the vertex/edge-\EP\
	if there is a sequence $(t_k)_{k \in \N}$ such that for every $k\in \N$ and every (di-)graph $G$,
	either $G$ contains $k$ vertex/edge-disjoint subgraphs each isomorphic to a member of $\cF$,
	or there is a set $X$ of at most $t_k$ vertices/edges such that $G-X$ contains no subgraph isomorphic to a member of $\cF$.
	
	Many classes of undirected graphs are known to have the vertex-\EP\ but only few of them are also investigated regarding the edge property. 
	A summary of some results can be found in \cite{RT16} and in a table in \cite{BHJ18}.
	
	When we address the \EP\ in directed graphs only a few results are known --- and nearly all of them regarding the vertex version.
	In 1996, Reed et al. in \cite{Reed1996} proved that directed cycles in digraphs have the vertex-\EP. The bound of $t_k$ in terms of $k$ is extremely large.
	
\begin{theorem}[Reed et al. \cite{Reed1996}]\label{vertexversion}
		For every $k\in \N$ there is a $t_k\in\N$ such that
		for every digraph $G$
		there are either $k$ vertex-disjoint directed cycles in $G$
		or a set $X\subseteq V(G)$ of size at most $t_k$ such that $G-X$ contains no directed cycle.
	\end{theorem}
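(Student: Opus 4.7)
The proof proceeds by induction on $k$. The base case $k=1$ is immediate: set $t_1 = 0$, since a digraph either contains a directed cycle (the single ``packed'' cycle) or is acyclic, in which case $X = \emptyset$ hits all cycles.

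For the inductive step, suppose the statement for $k-1$ with constant $t_{k-1}$, and let $G$ be a digraph with no $k$ vertex-disjoint directed cycles. First reduce to the strongly connected case: every directed cycle lies inside a single strongly connected component of $G$, so cycles in distinct components are automatically vertex-disjoint. A transversal for $G$ is then the union of transversals for its SCCs, and the packing number of $G$ is the sum of the packing numbers of its SCCs. Hence we may assume $G$ is strongly connected.

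The technical core is a structural dichotomy for strongly connected digraphs: either $G$ contains a vertex $v$ meeting many ``well-spread'' directed cycles, which together with cycles supplied by applying the inductive hypothesis to $G - v$ produce $k$ vertex-disjoint cycles and contradict our assumption; or $G$ admits a structural decomposition of bounded complexity. In the second case, a small hitting set is extracted directly from the decomposition by a dynamic-programming sweep, combining partial transversals across the pieces. Turning the search for such a vertex $v$ into an effective argument relies on a Menger-type statement for cycles through a vertex, together with a careful analysis of how cycles can interleave when passing through $v$; the leftover cycles outside $v$ must still be vertex-disjoint from those we route through $v$, which is the source of most of the loss in $t_k$.

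The main obstacle is controlling the second case. For undirected graphs the analogous step is handled cleanly by tree-width and the excluded-grid theorem, but directed graphs require a dedicated notion of width together with a delicate excluded-grid-type statement for digraphs, and both are highly nontrivial: one has to rule out the possibility of a ``thin'' but still cycle-rich structure that evades both any small separator and any large vertex-disjoint packing. It is precisely this structural machinery that forces $t_k$ to be astronomically large, as the parameters cascade through several recursion levels and produce tower-type or worse dependencies on $k$. I expect this structural analysis of strongly connected digraphs --- and in particular the quantitative form of the dichotomy --- to be by far the hardest part of the argument.
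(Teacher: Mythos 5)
There is a genuine gap here --- in fact the proposal is an outline rather than a proof. Note first that the paper itself does not prove this statement: it is imported as a black box from Reed, Robertson, Seymour and Thomas \cite{Reed1996}, so there is no in-paper argument to compare against. Judged on its own terms, your proposal correctly handles only the trivial parts: the base case $k=1$ with $t_1=0$, and the reduction to strongly connected components (which is fine, since every directed cycle lies in a single strong component, though even there one must argue slightly carefully that the per-component packing numbers sum to at most $k-1$ and that the induction closes). Everything that actually carries the theorem is asserted rather than proved: the ``structural dichotomy'' between a vertex meeting many well-spread cycles and a bounded-complexity decomposition is never formulated precisely, the ``Menger-type statement for cycles through a vertex'' is named but not stated or proved, and the ``dynamic-programming sweep'' extracting a hitting set from the decomposition is not described. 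You explicitly flag this structural analysis as the hardest part of the argument and then do not supply it; that is precisely the content of the theorem.

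A secondary point: the route you gesture at is also not quite the historical one. The 1996 proof of Reed et al.\ predates the directed grid theorem (which Kawarabayashi and Kreutzer only established much later, and which this paper cites separately as \cite{KK14}), so the second branch of the dichotomy cannot be discharged by ``a delicate excluded-grid-type statement for digraphs'' as if it were available off the shelf --- in 1996 it was not, and the actual argument proceeds through a bespoke Ramsey-type analysis of how cycles interleave in a strongly connected digraph with no small hitting set. If you intend to invoke directed tree-width and a directed excluded-grid theorem, you must either prove those statements or cite them explicitly, and then still supply the argument that bounded directed tree-width yields a small cycle transversal. As it stands, no step beyond the reduction to strong components is verifiable.
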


	To the best of our knowledge the edge-version for directed cycles
	has been an open problem so far.
	In this paper we will prove that it is indeed true.

\section{Proof of the theorem}
	Even et al. \cite{Even1998} showed a correspondence between
	edge-hitting sets in $G$ and vertex-hitting sets in $L(G)$ and for that they also
	 defined the \emph{directed line graph}. We will show an analogous statement for edge-disjoint and 
	 vertex-disjoint directed cycles and can then prove our main theorem.

	Let $G=(V(G),E(G))$ be a digraph. 
	The \emph{directed line graph} $L(G)$ of $G$ has $E(G)$ as its vertex set 
	and two directed edges $(a,b), (c,d)$ of $G$ are joined by a directed edge from $(a,b)$ to $(c,d)$ in $L(G)$
	if and only if $b=c$.
	If $T$ is a subgraph of $L(G)$, we denote by $G[T]$
	the minimal subgraph of $G$ that has edge set $V(T)$.
	For a vertex $u \in V(G)$ we define $E(u)$ as the set of all edges that have $u$ as their first or second vertex. It is easy to see that the subdigraph $L_u=L(G)[ E(u)]$
	%
	% \{(u,v)\in E(D)\}\cup \{(v,u) \in E(D)\}]$ 
	contains no directed cycle.

	\begin{lemma} \label{preimage}
		For every directed cycle $C$ in $G$, $C'=L(G)[E(C)]$ is a directed cycle in $L(G)$.
		For every directed cycle $C'$ in $L(G)$, $C=G[C']$ contains a directed cycle in $G$.
	\end{lemma}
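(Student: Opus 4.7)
The plan is to unpack the definition of $L(G)$ in two directions and observe that edges of $G$ correspond to vertices of $L(G)$ and that consecutive adjacency in $L(G)$ means the corresponding edges of $G$ meet head-to-tail.

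For the first statement, I would fix a directed cycle $C = v_1 v_2 \cdots v_n v_1$ in $G$ with edges $e_i = (v_i, v_{i+1})$, indices taken modulo $n$. By the definition of $L(G)$, there is an edge from $e_i$ to $e_j$ in $L(G)$ precisely when the head of $e_i$ equals the tail of $e_j$, that is $v_{i+1} = v_j$. Because the $v_i$ are pairwise distinct, this happens if and only if $j \equiv i+1 \pmod n$. Hence $L(G)[E(C)]$ is exactly the directed cycle $e_1 \to e_2 \to \cdots \to e_n \to e_1$.

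For the second statement, let $C' = f_1 f_2 \cdots f_m f_1$ be a directed cycle in $L(G)$, so that each $f_i$ is an edge of $G$ and the head of $f_i$ coincides with the tail of $f_{i+1}$ (indices modulo $m$). Writing $f_i = (u_i, u_{i+1})$ with $u_{m+1} = u_1$, the sequence $u_1 u_2 \cdots u_m u_1$ is a closed directed walk in $G$ whose edges $f_1,\dots,f_m$ are pairwise distinct (as they are distinct vertices of $C'$). Any closed directed walk with at least one edge contains a directed cycle: iteratively shortcut whenever a vertex repeats, which terminates with a directed cycle since the walk is finite. This cycle uses only edges from $\{f_1,\dots,f_m\} = V(C')$, so it is a subgraph of $G[C']$.

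I do not expect a real obstacle here; the only subtle point is that $G[C']$ need not itself be a cycle, since the closed walk underlying $C'$ may revisit vertices of $G$ (only the edges are forced to be distinct). This is exactly why the statement is phrased as ``contains a directed cycle'' rather than ``is a directed cycle'', and it is handled by the standard closed-walk-to-cycle shortcut argument.
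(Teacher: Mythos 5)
Your proof is correct and follows essentially the same route as the paper: translating the cycle edge-by-edge into $L(G)$ for the first claim, and extracting a directed cycle from the closed directed walk underlying $C'$ for the second. You even add the small check (that no chords arise among the $e_i$ because the $v_i$ are distinct) which the paper leaves implicit, and your shortcutting argument is just a rephrasing of the paper's choice of a minimal index pair $i<j$ with $u_i^1=u_j^2$.
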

	\begin{proof}
		Let $u_1,\ldots, u_\ell$ be the vertices of $C$. 
		Therefore, the vertices of $C'$ are $v_1=(u_1,u_2),\ldots,v_{\ell}=(u_\ell,u_1)$
		 which are all distinct as $u_1,\ldots, u_\ell$ are distinct.
		The vertices $v_i$ and $v_{i+1 \pmod{\ell}}$ in $L(G)$ are joined by a directed edge in $L(G)$
		because the endvertex of $v_i$ is the startvertex of $v_{i+1 \pmod{\ell}}$.
		Hence, $C'$ is a directed cycle.
		
		Let $C'$ be a directed cycle in $L(G)$ with vertex set $\{v_i=(u_i^1,u_i^2): i=1,\ldots, \ell\}$.
		As $v_i$ and $v_{i+1}$ are joined by a directed edge, we have $u_i^2=u_{i+1 \pmod{\ell}}^1$ for every $i=1,\ldots, \ell$.		
		Now, $G[C']$ has vertex set $\{u_1^1,\ldots, u_\ell^1\}$ where not all vertices are necessarily distinct
		and the edge set consists of all edges $v_i=(u_i^1,u_{i+1 \pmod{\ell}}^1)$, $i=1,\ldots, \ell$.
		Let $i<j$ be integers such that $u_i^1=u_j^2$ and such that $j-i$
		% the size of $\{ v_i,v_{i+1 \pmod{\ell}}, \ldots ,v_{j-1 \pmod{\ell}},v_j\}$ 
		is minimal. It then 
		follows that the graph $G[\{ v_i,\ldots,v_j\}]$ 
		is a directed cycle contained in $G[C']$. 
	\end{proof}
	
	Now we can prove our main theorem.
	
	\begin{theorem}
		For every $k\in \N$ there is a $t_k\in\N$ such that
		for every digraph $G$
		there are either $k$ edge-disjoint directed cycles in $G$
		or a set $X\subseteq E(G)$ of size at most $t_k$ such that $G-X$ contains no directed cycle.
	\end{theorem}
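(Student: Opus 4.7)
The plan is a direct reduction from the edge version in $G$ to the vertex version in $L(G)$, using Lemma~\ref{preimage} as the bridge and Theorem~\ref{vertexversion} as a black box. Given $k$, I define $t_k$ to be exactly the constant supplied by Theorem~\ref{vertexversion} for the same $k$. Then, for an arbitrary digraph $G$, I apply Theorem~\ref{vertexversion} to the directed line graph $L(G)$: either $L(G)$ contains $k$ vertex-disjoint directed cycles, or there is a set $X \subseteq V(L(G)) = E(G)$ of size at most $t_k$ meeting every directed cycle of $L(G)$.

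In the first case, let $C_1',\ldots,C_k'$ be pairwise vertex-disjoint directed cycles in $L(G)$. By the second part of Lemma~\ref{preimage}, each $G[C_i']$ contains a directed cycle $C_i$ in $G$. Since $E(C_i) \subseteq V(C_i')$ (recall that $G[C_i']$ is defined as the minimal subgraph of $G$ with edge set $V(C_i')$), and the vertex sets $V(C_i') \subseteq E(G)$ are pairwise disjoint, the cycles $C_1,\ldots,C_k$ are edge-disjoint in $G$, as required.

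In the second case, I claim that the same set $X$, now viewed as a subset of $E(G)$, is an edge-hitting set for directed cycles in $G$. Indeed, if $G-X$ still contained a directed cycle $C$, then by the first part of Lemma~\ref{preimage}, $C' = L(G)[E(C)]$ would be a directed cycle in $L(G)$. But $V(C') = E(C)$ is disjoint from $X$, so $C'$ would survive in $L(G) - X$, contradicting the choice of $X$.

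I do not expect any real obstacle: once Lemma~\ref{preimage} translates cycles in both directions, the reduction is essentially bookkeeping. The only point to check carefully is that in the first case the cycles we extract in $G$ are genuinely edge-disjoint — this uses precisely that $V(L(G)) = E(G)$ together with the containment $E(C_i) \subseteq V(C_i')$ afforded by the definition of $G[\cdot]$. No additional quantitative loss is incurred, so the bound $t_k$ for the edge version coincides with the (enormous) bound from Theorem~\ref{vertexversion}.
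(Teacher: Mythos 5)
Your proposal is correct and follows essentially the same reduction as the paper: apply Theorem~\ref{vertexversion} to $L(G)$ and translate both outcomes back to $G$ via Lemma~\ref{preimage}, with $t_k$ unchanged. The only (welcome) difference is that you spell out the edge-disjointness of the extracted cycles via $E(C_i)\subseteq V(C_i')$, which the paper leaves implicit.
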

	\begin{proof}
		Apply Theorem \ref{vertexversion} to the directed line graph $L(G)$ of $G$.
		If the theorem returns $k$ vertex-disjoint directed cycles $C_1,\ldots, C_k$ in $L(G)$,
		their preimages $G[C_i]$ in $G$ are edge-disjoint and each contain a directed cycle in $G$, by Lemma \ref{preimage}.
		Hence, $G$ contains $k$ edge-disjoint directed cycles.
		
		If the theorem returns a vertex hitting set $X$ of size $t_k$ in $L(G)$,
		the same set $X$ (as a set of edges) is an edge hitting set for directed cycles in $G$.
		Namely, if $C$ was a directed cycle in $G-X$,
		then, by Lemma \ref{preimage}, the digraph $L(G)[C]$ would be a directed cycle in $L(G)$
		that avoids $X$ contradicting the choice of $X$ as a hitting set in $L(G)$.
	\end{proof}

\section{Discussion}
	Very lately, Kawarabayashi and Kreutzer mentioned in their paper \cite{KK14} on the directed grid theorem that for any $\ell\in \N$, the class of  directed cycles of length at least $\ell$ have the vertex-\EP. So is it possible to use the method above also for long directed cycle?
	Unfortunately not.
	Although the image of a long cycle in $G$ is a long cycle in $L(G)$ again,
	the preimage of a long cycle in $L(G)$ may consist of many short cycles in $G$.
	
	We thank Maximilian F\"urst for asking a question that led to this note.

	\bibliographystyle{amsplain}
	\bibliography{directed}

\end{document}